\numberwithin{equation}{section}
\numberwithin{figure}{section}
\theoremstyle{plain}
\newtheorem{thm}{\protect\theoremname}
  \theoremstyle{plain}
  \newtheorem{lem}[thm]{\protect\lemmaname}
  \theoremstyle{remark}
  \theoremstyle{plain}
  \newtheorem{cor}[thm]{\protect\corollaryname}
  \providecommand{\lemmaname}{Lemma}
  \providecommand{\remarkname}{Remark}
  \providecommand{\corollaryname}{Corollary}
\providecommand{\theoremname}{Theorem}
\begin{document}

\title{A Generalized Newton-Girard Identity}

\author{Tanay Wakhare}

\address{University of Maryland, College Park, MD 20742, USA}
\email{twakhare@gmail.com}

\begin{abstract}We present a generalization of the Newton-Girard identities, along with some applications. As an addendum, we collect many evaluations of symmetric polynomials to which these identities apply.
\end{abstract}
\maketitle

\section{Introduction}

The theory of symmetric polynomials has been explored by mathematicians for centuries, and is intimately connected to many different fields, including combinatorial enumeration and the representation theory of the symmetric group. Section \ref{sec4} contains many examples of special symmetric polynomials, such as binomial coefficients, $q$-binomial coefficients, and Stirling numbers, all of which have been extensively and independently studied. Two of the fundamental identities for symmetric polynomials are the \textit{Newton-Girard} identities, which state that \cite[(Chapter 1, 2.10-2.11)]{Macdonald}
\begin{equation}\label{newtone}
ne_n = \sum_{k=1}^n(-1)^{k-1}p_ke_{n-k}
\end{equation}
and
\begin{equation}\label{newtonh}
nh_n = \sum_{k=1}^n p_k h_{n-k}.
\end{equation}
Here, $e_n$ are the \textit{elementary symmetric functions}, $h_n$ are the \textit{complete symmetric functions}, and $p_n$ are the \textit{power sums}. They form generating sets for the ring of symmetric functions, and satisfy the following definitions. Letting $\{x_i\} = \{x_1, x_2, \ldots\}$ denote a (possibly infinite) set of variables, we have
\begin{align}
e_n  &:= \sum_{i_1<i_2<\ldots < i_n} x_{i_1} x_{i_2} \cdots x_{i_n} \\
h_n  &:= \sum_{i_1\leq i_2\leq\ldots \leq i_n} x_{i_1} x_{i_2} \cdots x_{i_n} \\
p_n  &:= \sum_{i=1}^\infty x_i^n. 
\end{align}
We require $e_0=h_0=1$ and $p_0=0$. Then we have the following formal generating series and products \cite{Macdonald2}:
\begin{align}
E(t) &= \sum_{k=0}^{\infty}e_k t^k = \prod_{i=1}^{\infty}\left(1+tx_i\right), \label{eprod} \\
H(t) &= \sum_{k=0}^{\infty}h_k t^k = \prod_{i=1}^{\infty}\left(1-tx_i\right)^{-1} = E(-t)^{-1}, \label{hprod} \\
P(t) &= \sum_{k=1}^{\infty}p_k t^{k-1} = \sum_{i=1}^{\infty}\frac{x_i}{1-tx_i} = \frac{H'(t)}{H(t)} = \frac{E'(-t)}{E(-t)}\label{pprod}.
\end{align}
We also introduce some closely related bases for the ring of symmetric functions. Let $\lambda \vdash n$ denote a partition of $n$, so that $\lambda = (\lambda_1, \lambda_2,\cdots)$ with $n = \sum_i \lambda_i$ and $\lambda_1 \geq \lambda_2 \geq \cdots$. Then we define $e_\lambda = e_{\lambda_1}e_{\lambda_2}\cdots$. Similarly, we have $h_\lambda = h_{\lambda_1}h_{\lambda_2}\cdots$ and $p_\lambda = p_{\lambda_1}p_{\lambda_2}\cdots$. Finally, we define the monomial symmetric functions $m_\lambda:=\sum_{\sigma \in S_\lambda} x_{\sigma(1)}^{\lambda_1} x_{\sigma(2)}^{\lambda_2} \cdots$, where the sum is over the set of permutations giving distinct terms in the sum (so that the coefficient of any monomial in the sum is simply $1$). When $\lambda=(1,1,\cdots)$, $m_\lambda$ reduces to an elementary symmetric function.

The derivation of the Newton-Girard identities from these generating products is instructive. There are many different proofs, using everything from recursive approaches to the Cayley-Hamilton Theorem \cite{Baker1} \cite{Mead1} \cite{Kalman1}. One of the easiest proofs, detailed by Macdonald \cite[Chapter 1]{Macdonald}, formally manipulates the generating products for $E(t)$ and $H(t)$. We begin with $E(t)$ and apply the differential operator $t \frac{d}{dt}$. We can take a logarithmic derivative of the product, yielding $$t \frac{d}{dt} E(t)=  \sum_{k=1}^{\infty} k e_k t^k = \prod_{i=1}^{\infty}\left(1+tx_i\right)  \sum_{i=1}^\infty \frac{tx_i}{1+tx_i}.$$ We recognize the sum as $tP(-t)$, which then gives $$ \sum_{k=1}^{\infty} k e_k t^k =\left( \sum_{k=0}^\infty e_k t^k \right)\left(\sum_{k=1}^\infty   p_k (-1)^{k+1} t^k  \right) = \sum_{ k=1}^\infty t^k \sum_{j=1}^k  e_{k-j} p_j (-1)^{j+1} .$$ Equating coefficients of $t^k$ proves Equation \eqref{newtone}. Equation \eqref{newtonh} is proved analogously, by applying the operator $t\frac{d}{dt}$ to the series and product forms for $H(t)$.

We explore this method in the rest of this paper: taking derivatives and logarithmic derivatives while switching between the product and series forms of $E$, $P$, and $H$. In Section \ref{sec2} we present a Newton type representation for the power sum $p_n$. In Section \ref{sec3} we present the main result of our study: a generalization of the Newton-Girard identities to two arbitrary sets of variables, given in Theorem \ref{thm4}. Finally, in Section \ref{sec4} we collect some examples of symmetric polynomial evaluations.


\section{Newton-type representation for $p_n$}\label{sec2}
While Identities \eqref{newtone} and \eqref{newtonh} have been extensively studied in the literature, in Theorem \ref{thm1} we present a similar expression for $p_n$ for the first time. We note the \textit{extreme similarity} to \eqref{newtone} and \eqref{newtonh}, except for the factor of $k$. The first equality is given by \cite[Lemma 2.1]{Merca1}, while the second appears to be new.
\begin{thm}\label{thm1}
We have the Newton-type convolutions
\begin{equation}
p_n = \sum_{k=0}^n (-1)^{k-1} k e_k h_{n-k} = \sum_{k=0}^n (-1)^{n-k} k h_k e_{n-k}.
\end{equation}
\end{thm}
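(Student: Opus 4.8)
The plan is to mimic the generating-function manipulation used in the excerpt for \eqref{newtone}, but now exploiting the \emph{two} expressions for $P(t)$ in \eqref{pprod} together with the reciprocal relation $H(t) = E(-t)^{-1}$ from \eqref{hprod}. The whole argument reduces to multiplying a logarithmic-derivative formula for $P(t)$ by $t$, clearing the denominator using $E(-t)H(t)=1$, and then reading off the coefficient of $t^n$; no new ideas beyond those already in the introduction are needed.

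For the first equality I would start from $P(t) = E'(-t)/E(-t)$. Since $H(t) = 1/E(-t)$, multiplying by $t$ gives $tP(t) = t\,E'(-t)\,H(t)$. Now $tP(t) = \sum_{k\geq 1} p_k t^k$, while differentiating $E$ and evaluating at $-t$ yields $t\,E'(-t) = \sum_{k\geq 0} (-1)^{k-1} k\, e_k t^k$, the $k=0$ term vanishing because of the factor $k$. Multiplying this against $H(t) = \sum_m h_m t^m$ and equating the coefficient of $t^n$ on both sides produces $p_n = \sum_{k=0}^n (-1)^{k-1} k\, e_k h_{n-k}$.

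For the second equality I would run the symmetric computation, starting instead from $P(t) = H'(t)/H(t)$ and using $E(-t) = 1/H(t)$, so that $tP(t) = t\,H'(t)\,E(-t)$. Here $t\,H'(t) = \sum_k k\, h_k t^k$ and $E(-t) = \sum_j (-1)^j e_j t^j$, and extracting the coefficient of $t^n$ in the Cauchy product gives $p_n = \sum_{k=0}^n (-1)^{n-k} k\, h_k e_{n-k}$.

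The only real obstacle I anticipate is sign bookkeeping: one must carefully distinguish the sign arising from evaluating $E'$ at $-t$ (which contributes $(-1)^{k-1}$) from the sign arising from expanding $E(-t)$ itself (which contributes $(-1)^{j}$), and confirm that the factor $k$ annihilates the otherwise spurious $k=0$ term so that the stated lower summation limit is harmless. Because all three of $E$, $H$, and $P$ are being treated as formal power series over the coefficient ring, no convergence questions arise, and the coefficient extraction is fully rigorous.
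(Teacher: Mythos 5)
Your proposal is correct and follows essentially the same route as the paper: both derive $tP(t) = tE'(-t)H(t) = tH'(t)E(-t)$ from the reciprocal relation $E(-t) = H(t)^{-1}$, expand each factor as a formal power series, and read off the coefficient of $t^n$ in the Cauchy products. The sign bookkeeping you flag ($(-1)^{k-1}$ from $E'(-t)$ versus $(-1)^{j}$ from $E(-t)$) is handled identically in the paper, so there is nothing to add.
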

\begin{proof}
We start with the product form $$\sum_{k=0}^\infty p_k t^k =  tP(t) = t\frac{H'(t)}{H(t)} = t\frac{E'(-t)}{E(-t)}.$$ By looking at the definition of $E$ and $H$, we also observe $E(-t) = \prod_{i=1}^\infty  (1-x_it) = H(t)^{-1}$. This means $$tP(t) = tH'(t)E(-t) = tE'(-t)H(t).$$ Taking products of both sides and equating coefficients of $t^n$ completes the proof:

$$  tH'(t)E(-t) = \left(\sum_{k=0}^\infty kh_k t^{k} \right)\left(\sum_{k=0}^\infty e_k t^{k} (-1)^k \right)= \sum_{n=0}^\infty t^n\sum_{k=0}^n  (-1)^{n-k} k h_k e_{n-k}  $$ and $$  tE'(-t)H(t) =\left( \sum_{k=0}^\infty ke_k t^{k} (-1)^{k-1}\right)\left(\sum_{k=0}^\infty h_k t^{k} \right) = \sum_{n=0}^\infty t^n \sum_{k=0}^n (-1)^{k-1} k e_k h_{n-k}.  $$
\end{proof}


\section{Two Variable Newton Identities}\label{sec3}
We now extend the formulae \eqref{newtone} and \eqref{newtonh} to two (possibly infinite) sets of variables, motivated by the result \cite[(6.3)]{Macdonald2}
\begin{equation}
\prod_{i,j}(1-x_i y_j)^{-1} = \sum_{\lambda} h_\lambda(x) m_\lambda(y).
\end{equation}
In other words, $m$ and $h$ are \textit{dual bases} for the ring of symmetric functions. This motivates us to consider 
\begin{align}
\Pi\left(t\right):&= \prod_{i,j}(1-x_i y_j t)^{-1},\label{bigPi}\\
\pi\left(t\right):&= \prod_{i,j}(1+x_i y_j t).\label{smallPi}
\end{align}

In fact, these are generating products for certain symmetric polynomials. This lemma allows us to switch between the product and series forms of $\Pi(t)$, which simplifies the following analysis.
\begin{lem}\label{lem2}
We have the generating products
\begin{equation}
\Pi(t) =  \sum_{k=0}^\infty t^k \sum_{\lambda \vdash k} h_\lambda (x) m_\lambda(y),
\end{equation}
and
\begin{equation}
\pi(t) =  \sum_{k=0}^\infty t^k \sum_{\lambda \vdash k} e_\lambda (x) m_\lambda(y).
\end{equation}
\end{lem}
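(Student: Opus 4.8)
The plan is to reduce both identities to the single-variable generating products \eqref{eprod} and \eqref{hprod} by expanding one index at a time, and then to recognize the resulting $y$-sums as monomial symmetric functions. Since the two statements are structurally identical, I would prove them in parallel, using \eqref{hprod} for $\Pi(t)$ and \eqref{eprod} for $\pi(t)$.

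First I would fix $j$ and expand the product over $i$. By \eqref{hprod}, $\prod_i (1 - x_i (y_j t))^{-1} = \sum_{k\geq 0} h_k(x)\,(y_j t)^k$, and by \eqref{eprod}, $\prod_i (1 + x_i (y_j t)) = \sum_{k\geq 0} e_k(x)\,(y_j t)^k$. Taking the product over $j$ then gives $\Pi(t) = \prod_j \sum_{k\geq 0} h_k(x)\, y_j^k t^k$ and $\pi(t) = \prod_j \sum_{k\geq 0} e_k(x)\, y_j^k t^k$, so that everything is expressed through the ordinary symmetric functions in the $x$-variables.

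Next I would expand each infinite product over $j$. A term in the expansion is obtained by choosing an exponent $k_j \geq 0$ for each $j$ (all but finitely many zero), contributing $\big(\prod_j h_{k_j}(x)\big)\,\prod_j y_j^{k_j}\, t^{\sum_j k_j}$ in the $\Pi$ case, and the analogue with $e$ in the $\pi$ case. I would then group these terms by the partition $\lambda$ obtained by sorting the nonzero exponents $k_j$ into weakly decreasing order. The key observation is that the $x$-coefficient $\prod_j h_{k_j}(x)$ depends only on the multiset $\{k_j\}$, hence equals $h_\lambda(x)$ for \emph{every} monomial of shape $\lambda$; summing the corresponding distinct monomials $\prod_j y_j^{k_j}$ reproduces exactly $m_\lambda(y)$ by its definition. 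Writing $|\lambda| = \sum_i \lambda_i$, the attached power of $t$ is $t^{|\lambda|}$, and collecting terms by $k = |\lambda|$ yields the stated double sums.

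The only point requiring care is this regrouping step: one must check that each distinct monomial of shape $\lambda$ arises exactly once and carries the common coefficient $h_\lambda(x)$ (resp.\ $e_\lambda(x)$), which is immediate from the commutativity of multiplication and the definition of $m_\lambda$. I expect no serious obstacle, since the whole argument is simply a degree-graded refinement of the Cauchy identity $\prod_{i,j}(1 - x_i y_j)^{-1} = \sum_\lambda h_\lambda(x) m_\lambda(y)$ quoted above; indeed, one could instead deduce the first identity by substituting $y_j \mapsto t y_j$ there and using $m_\lambda(ty) = t^{|\lambda|} m_\lambda(y)$, with the second identity being the analogous $e$--$m$ dual obtained by the same expansion.
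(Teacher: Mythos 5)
Your proposal is correct and takes essentially the same route as the paper: expand the inner product over $i$ via \eqref{hprod} (resp.\ \eqref{eprod}) to get $\prod_j \sum_k h_k(x) y_j^k t^k$, expand the product over $j$, and group the resulting monomials by the partition formed by the exponents so that each shape $\lambda$ contributes $h_\lambda(x) m_\lambda(y)$ (resp.\ $e_\lambda(x) m_\lambda(y)$) in degree $|\lambda|$. Your closing observation, that the first identity can instead be deduced from the quoted Cauchy identity by substituting $y_j \mapsto t y_j$ and using homogeneity of $m_\lambda$, is a valid shortcut the paper does not use, but your main argument coincides with the paper's proof.
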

\begin{proof}
We can expand $\Pi(t)$ as the generating product for $h_n(x)$ using \eqref{hprod} and then factor out the contributions from $\{y_j\}$, yielding
\begin{align*}
\Pi(t) = &\prod_{j=1}^\infty \left( \prod_{i=1}^\infty  \frac{1}{1-x_iy_j t} \right) \\
=  &\prod_{j=1}^\infty \left( \sum_{k=0}^\infty h_k(x) y_j^k t^k\right)  \\
 = &\left(  1+h_1(x) y_1 t+ h_2(x) y_1^2 t^2 +h_3(x) y_1^3 t^3 + h_4(x) y_1^4 t^4 + \ldots     \right) \\
 &\times \left(  1+h_1(x) y_2 t+ h_2(x) y_2^2 t^2 +h_3(x) y_2^3 t^3 + h_4(x) y_2^4 t^4 + \ldots     \right) \\
 &\times \left(  1+h_1(x) y_3 t+ h_2(x) y_3^2 t^2 +h_3(x) y_3^3 t^3 + h_4(x) y_3^4 t^4 + \ldots     \right) \\
&\times\cdots \\
 = &1 +t\left( h_1(x) \sum_{i=1}^\infty y_i \right)  + t^2 \left(  h_2(x)  \sum_{i=1}^\infty y_i^2 + h_1(x)h_1(x)  \sum_{i\neq j}^\infty y_i y_j    \right) + \cdots\\
= & \sum_{k=0}^\infty t^k \sum_{\lambda \vdash k} h_\lambda (x) m_\lambda(y).
\end{align*}
We now explain the last equality. For what follows, let $\lambda = (1^{n_1} 2^{n_2} \ldots k^{n_k})$ be a partition of $k$ where $1$ appears $n_1$ times, $2$ appears $n_2$ times, and so on. First, note that the coefficient of $t^k$ will be a polynomial in $\{h_i(x)\}$ and $\{y_i\}$. A general term in this polynomial will be $h_{n_1}(x)h_{n_2}(x)\cdots h_{n_k}(x)$ times $y_{j_1}^{n_1} \cdots y_{j_k}^{n_k}$. When we collect $t^k$ terms, we then sum over \textit{all possible distinct permutations} of the indices $\{j_1, \ldots, j_k\}$. This means that while considering terms corresponding to a particular partition $\lambda$, we can then factor out $h_\lambda$ and then rewrite the resulting sum over monomials in the $\{y_i\}$ as $m_\lambda(y)$. Every partition $\lambda$ of $k$ will have $h_\lambda$ and $m_\lambda$ terms of this form corresponding to it, leading to the given coefficient. 


The second identity is proven with the exact same reasoning:
\begin{align*}
\pi(t) = &\prod_{j=1}^\infty \left( \prod_{i=1}^\infty  ({1+x_iy_j t}) \right) \\
=  &\prod_{j=1}^\infty \left( \sum_{k=0}^\infty e_k(x) y_j^k t^k\right)  \\
 = &1 +t\left( e_1(x) \sum_{i=1}^\infty y_i \right)  + t^2 \left(  e_2(x)  \sum_{i=1}^\infty y_i^2 + e_1(x)e_1(x)  \sum_{i\neq j}^\infty y_i y_j    \right) + \cdots\\
= & \sum_{k=0}^\infty t^k \sum_{\lambda \vdash k} e_\lambda (x) m_\lambda(y).
\end{align*}
\end{proof}
When $y_1 =1$ and $y_j=0, j \geq 2,$ this product identity specializes to the generating products for $E(t)$ and $H(t)$. For instance, the sum $\sum_{\lambda \vdash k}h_\lambda(x)m_\lambda(y)$ reduces to $h_k(x)$ since $m_\lambda(y)$ is only nonzero when $\lambda$ consists of a single part, $k$.

\begin{cor}
We have the symmetries 
\begin{align}
\sum_{\lambda \vdash k} h_\lambda (x) m_\lambda(y) &= \sum_{\lambda \vdash k} h_\lambda (y) m_\lambda(x) \\
\sum_{\lambda \vdash k} e_\lambda (x) m_\lambda(y) &= \sum_{\lambda \vdash k} e_\lambda (y) m_\lambda(x).
\end{align}
\end{cor}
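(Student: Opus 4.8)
The plan is to exploit the manifest symmetry of the generating products $\Pi(t)$ and $\pi(t)$ under interchange of the two variable sets $\{x_i\}$ and $\{y_j\}$. Each factor in \eqref{bigPi} and \eqref{smallPi} depends only on the product $x_i y_j = y_j x_i$, and the products range over all pairs of indices $(i,j)$; hence swapping the two alphabets merely permutes the factors among themselves and leaves both $\Pi(t)$ and $\pi(t)$ unchanged. This symmetry is the entire content of the corollary, and all that remains is to read it off through Lemma \ref{lem2}.

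Concretely, I would apply Lemma \ref{lem2} twice. In its stated form it gives $\Pi(t) = \sum_{k\geq 0} t^k \sum_{\lambda \vdash k} h_\lambda(x) m_\lambda(y)$. Because the lemma holds for any two sets of variables, I may also apply it with the roles of $\{x_i\}$ and $\{y_j\}$ exchanged, obtaining $\Pi(t) = \sum_{k\geq 0} t^k \sum_{\lambda \vdash k} h_\lambda(y) m_\lambda(x)$. The left-hand sides agree by the symmetry noted above, so equating coefficients of $t^k$ yields the first identity. The second identity follows in exactly the same way from the two expansions of $\pi(t)$ produced by the same lemma.

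The main (and essentially only) obstacle is a bookkeeping check rather than a mathematical difficulty: one must confirm that Lemma \ref{lem2} really is symmetric in its hypotheses, since its proof handled $\{x_i\}$ through the complete (respectively elementary) symmetric functions and $\{y_j\}$ through the monomials $m_\lambda$, treating the two alphabets asymmetrically even though the underlying product is symmetric. I would therefore justify the swapped application explicitly, noting that rerunning the expansion in Lemma \ref{lem2} with the alphabets interchanged is a verbatim repetition of that argument and produces the claimed second expansion. Once both coefficient sequences are available, equating them term by term completes the proof.
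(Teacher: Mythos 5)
Your proposal is correct and is essentially the paper's own argument: the paper likewise deduces the corollary from the symmetry of $\Pi(t)$ and $\pi(t)$ in $x$ and $y$ visible in \eqref{bigPi} and \eqref{smallPi}, combined with the expansions of Lemma \ref{lem2}. Your write-up simply makes explicit the step (applying the lemma with the alphabets swapped and equating coefficients of $t^k$) that the paper leaves implicit.
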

\begin{proof}
The corollary follows from the symmetry of $\Pi$ and $\pi$ with respect to $x$ and $y$, visible in the definitions \eqref{bigPi} and \eqref{smallPi}. 
\end{proof}
Using this product expansion, we can then present a generazation of the Newon-Girard identities.
\begin{thm}\label{thm4}
We have the generalized Newton-Girard identities
\begin{align}
n \sum_{\lambda \vdash n} h_\lambda (x) m_\lambda(y) &= \sum_{k=0}^n p_{n-k}(x)p_{n-k}(y) \sum_{\lambda \vdash k}   h_\lambda (x) m_\lambda(y), \\
n \sum_{\lambda \vdash n} e_\lambda (x) m_\lambda(y) &= \sum_{k=0}^n (-1)^{n-k}p_{n-k}(x)p_{n-k}(y) \sum_{\lambda \vdash k}   e_\lambda (x) m_\lambda(y).
\end{align}
\end{thm}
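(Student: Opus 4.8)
The plan is to mimic the classical proof of the Newton-Girard identities, applying the logarithmic-derivative operator $t\frac{d}{dt}$ to the generating product $\Pi(t)$ and then reading off the coefficient of $t^n$. First I would note, by Lemma \ref{lem2}, that the series form $\Pi(t) = \sum_{k=0}^\infty t^k \sum_{\lambda \vdash k} h_\lambda(x) m_\lambda(y)$ is matched by the product form $\Pi(t) = \prod_{i,j}(1-x_iy_jt)^{-1}$, so that both forms of $t\frac{d}{dt}\log\Pi(t)$ can be computed and compared. Applying $t\frac{d}{dt}$ directly to the series gives $\sum_{k=0}^\infty k\,t^k \sum_{\lambda \vdash k} h_\lambda(x) m_\lambda(y)$, whose coefficient of $t^n$ is the left-hand side $n\sum_{\lambda \vdash n} h_\lambda(x) m_\lambda(y)$.

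The key step is to evaluate $t\frac{d}{dt}\Pi(t) = \Pi(t)\cdot t\frac{d}{dt}\log\Pi(t)$ using the product form. Taking the logarithmic derivative of the double product yields
\begin{equation}
t\frac{d}{dt}\log \Pi(t) = \sum_{i,j} \frac{x_iy_jt}{1-x_iy_jt} = \sum_{i,j}\sum_{m=1}^\infty (x_iy_jt)^m = \sum_{m=1}^\infty t^m \Bigl(\sum_i x_i^m\Bigr)\Bigl(\sum_j y_j^m\Bigr),
\end{equation}
and since $\sum_i x_i^m = p_m(x)$ and $\sum_j y_j^m = p_m(y)$, this collapses neatly to $\sum_{m=1}^\infty t^m\, p_m(x) p_m(y)$. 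The crucial feature here is that the double sum over $(i,j)$ factors into a product of two single power sums, one in each variable set; this is precisely what produces the symmetric factor $p_{n-k}(x)p_{n-k}(y)$ in the statement, and I expect this factorization to be the main conceptual point to get right.

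It then remains to multiply the two series. Writing $\Pi(t) = \sum_{k=0}^\infty t^k \sum_{\lambda \vdash k} h_\lambda(x) m_\lambda(y)$ and convolving with $\sum_{m=1}^\infty t^m p_m(x)p_m(y)$, the coefficient of $t^n$ in the product $\Pi(t)\cdot t\frac{d}{dt}\log\Pi(t)$ becomes $\sum_{m=1}^n p_m(x)p_m(y)\sum_{\lambda \vdash n-m} h_\lambda(x) m_\lambda(y)$; reindexing with $k = n-m$ recovers the right-hand side $\sum_{k=0}^n p_{n-k}(x)p_{n-k}(y)\sum_{\lambda\vdash k} h_\lambda(x)m_\lambda(y)$ (the $k=n$ term vanishes since $p_0=0$, consistent with the convention in the excerpt). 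Equating the two coefficient-of-$t^n$ expressions gives the first identity.

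For the second identity I would run the identical argument on $\pi(t) = \prod_{i,j}(1+x_iy_jt)$, whose series form is supplied by Lemma \ref{lem2}. The only change is a sign: $t\frac{d}{dt}\log\pi(t) = \sum_{i,j}\frac{x_iy_jt}{1+x_iy_jt} = \sum_{m=1}^\infty (-1)^{m-1} t^m p_m(x)p_m(y)$. Carrying out the same convolution and reindexing, with the sign $(-1)^{m-1} = (-1)^{n-k-1}$ absorbed appropriately, produces the alternating factor $(-1)^{n-k}$ in the claimed formula. I do not anticipate a serious obstacle beyond bookkeeping the sign, since the entire structure parallels the first case and the geometric-series expansion of $\frac{x_iy_jt}{1+x_iy_jt}$ is routine.
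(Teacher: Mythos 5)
Your approach is exactly the paper's: apply $t\frac{d}{dt}$ to $\Pi(t)$ and $\pi(t)$, compute the logarithmic derivative from the product form so that the double sum over $(i,j)$ factors as $\sum_m t^m p_m(x)p_m(y)$, and equate coefficients of $t^n$ against the series form supplied by Lemma \ref{lem2}. Your treatment of the first identity is correct and matches the paper's proof step for step.

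The problem is in your last paragraph. You correctly compute
\[
t\frac{d}{dt}\log\pi(t) = \sum_{i,j}\frac{x_iy_jt}{1+x_iy_jt} = \sum_{m=1}^\infty (-1)^{m-1} t^m p_m(x)p_m(y),
\]
so after convolving with $\pi(t)$ and setting $k=n-m$, the coefficient of $t^n$ carries the sign $(-1)^{m-1}=(-1)^{n-k-1}$. But $(-1)^{n-k-1}=-(-1)^{n-k}$: this cannot be ``absorbed appropriately'' into the factor $(-1)^{n-k}$ of the stated formula, and asserting that it can is a genuine error---you are off by a global factor of $-1$. What your derivation actually proves is
\[
n \sum_{\lambda \vdash n} e_\lambda (x) m_\lambda(y) = \sum_{k=0}^n (-1)^{n-k-1}p_{n-k}(x)p_{n-k}(y) \sum_{\lambda \vdash k} e_\lambda (x) m_\lambda(y),
\]
and this, not the printed statement, is the correct identity. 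Check $n=1$: the left side is $e_1(x)m_1(y)=p_1(x)p_1(y)$, while the statement's right side reduces to $(-1)^{1}p_1(x)p_1(y)$, the wrong sign. Likewise, specializing $y=(1,0,0,\ldots)$ turns your version into the classical identity \eqref{newtone} after reindexing $j=n-k$, whereas the printed version acquires a spurious minus sign. In short, your derivation is sound and in fact exposes a sign typo in the theorem as stated (the paper's own proof, which only says the ``same process'' applies to $\pi(t)$, never works the sign out explicitly); but your final step---forcing the computed sign $(-1)^{n-k-1}$ to agree with the printed $(-1)^{n-k}$---is false, and you should instead state the identity with the sign your computation actually produces.
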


\begin{proof}
We can repeat the original proof of the Newton-Girard identities for the generating products above, with some minor modifications. We consider $\Pi(t)$ and $\pi(t)$, then apply the differential operator $t\frac{d}{dt}$ and equate coefficients of $t^k$. We begin by considering $\Pi(t)$ and taking a logarithmic derivative: $$t \frac{d}{dt}\Pi(t) = \Pi(t) \sum_{i,j=1}^\infty \frac{x_iy_jt}{1-x_iy_jt}.$$ We recognize $$ \sum_{i,j=1}^\infty \frac{x_iy_jt}{1-x_iy_j t} = \sum_{i,j=1}^\infty \sum_{k=1}^\infty{x_i^k y_j^k t^k} =\sum_{k=1}^\infty t^k \left(\sum_{i=1}^\infty x_i^k\right)\left(\sum_{j=1}^\infty y_j^k\right) = \sum_{k=1}^\infty t^k p_k(x)p_k(y).$$  
Since by Lemma \ref{lem2} $$\prod_{i,j}(1-x_i y_jt)^{-1} =\sum_{k=0}^\infty t^k\sum_{\lambda \vdash k} h_\lambda(x) m_\lambda(y),$$ we equate coefficients of $t^k$ in $$  t \frac{d}{dt}\Pi(t) = \sum_{k=0}^\infty t^k   k \sum_{\lambda \vdash k} h_\lambda (x) m_\lambda(y)$$ and $$   \Pi(t) \sum_{i,j=1}^\infty \frac{x_iy_jt}{1-x_iy_jt}   =\left( \sum_{k=0}^\infty t^k\sum_{\lambda \vdash k} h_\lambda(x) m_\lambda(y) \right)  \left( \sum_{k=0}^\infty t^k p_k(x)p_k(y) \right).$$ The same process, applied to $\pi(t)$, proves the second generalized Newton-Girard identity.
\end{proof}



While the classical Newton identities deal with a single set of variables, introducing a second set of variables in $\Pi(t)$ and $\pi(t)$ allowed us to prove analogs of the Newton identities. The natural next step would be to extend our results to three or more sets of variables, and see if this reveals new structure in the Newton identities. However, the three variable extension does not appear to have a similar closed form expression.

\section{Evaluation of symmetric polynomials}\label{sec4}
As an addendum, we collect several evaluations of symmetric polynomials. We can formulaically apply any of the results above, simply substituting the objects below whenever $e_k$, $h_k$, and $p_k$ appear. By applying Theorem \ref{thm4} to any of the specialized sets of variables $\{x_i\}$, we obtain identities of independent interest.


Firstly, if we set each $x_i$ to $1$, we recover binomial coefficients since $e_k=\binom{n}{k}$  and $h_k=\binom{n+k+1}{k}$. These follow from counting the number of unique orderings $i_1<\cdots<i_k$ and $i_1\leq \cdots \leq i_k$, with $1\leq i_l \leq n$. When we consider geometric progressions and set $x_i=q^i$, we obtain $q$-binomial coefficients as a consequence of the $q$-binomial theorem. The \textit{$q$-binomial coefficient} is defined, for $n \geq k$, as 
\begin{equation}
\begin{bmatrix} n \\ k \end{bmatrix}_q:= \frac{(q;q)_n}{(q;q)_k(q;q)_{n-k}},
\end{equation}
where 
\begin{equation}
(a;q)_n  := 
\begin{cases}
(1-a)(1-aq)\ldots (1-aq^{n-1}),   &n \geq 1 \\
1 &n=0
\end{cases}
\end{equation}
is a \textit{q-Pochhammer} symbol. We rely on the $q$-binomial theorem \cite[Chapter 1]{Gasper}, namely
\begin{equation}
(-t;q)_n = \prod_{k=0}^{n-1} (1+q^kt) = \sum_{k=0}^n q^{\binom{k}{2}} \begin{bmatrix} n \\ k \end{bmatrix}_q t^k,
\end{equation}
and
\begin{equation}
\frac{1}{(t;q)_n} = \prod_{k=0}^{n-1}\frac{1}{ (1-q^kt)} = \sum_{k=0}^\infty \begin{bmatrix} n+k-1 \\ k \end{bmatrix}_q t^k.
\end{equation}
Recognizing $(-t;q)_n$ as the generating product for elementary symmetric functions, and $\frac{1}{(t;q)_n}$ as generating complete symmetric functions while comparing coefficients of $t^k$ yields the given evaluation. 

Next we consider the \textit{Jacobi-Stirling} numbers, a generalization of the Stirling numbers which naturally arise in the spectral theory of the Jacobi differential expression \cite{Everitt1}. We let $\begin{bmatrix} n \\ k \end{bmatrix}_\gamma$ denote a Jacobi-Stirling number of the first kind, and  $\begin{Bmatrix} n \\ k \end{Bmatrix}_\gamma$ a Jacobi-Stirling number of the second kind. The article \cite{Mongelli1} proves the equalities 
\begin{equation}\label{stir1}
e_k(2\gamma, 2+4\gamma, \ldots,n(n-1+2\gamma)) = \begin{bmatrix} n \\ k \end{bmatrix}_\gamma
\end{equation}
and 
\begin{equation}\label{stir2}
h_k(2\gamma, 2+4\gamma, \ldots,n(n-1+2\gamma)) = \begin{Bmatrix} n \\ k \end{Bmatrix}_\gamma
\end{equation}
by showing that both sides of Identities \eqref{stir1} and \eqref{stir2} satisfy the same recurrence relations. This means that we can apply any of our theorems about $e_k$ and $h_k$ to the Jacobi-Stirling numbers, which is the main idea of \cite{Merca2} and \cite{Merca3}.

We now consider $w_{m,r}(n,k)$, an $r$-Whitney number of the first kind, and $W_{m,r}(n,k)$, an $r$-Whitney number of the second kind. These are common generalizations of the Stirling and Whitney numbers. Results about them are systematized in the work \cite{Merca4}, which explores them as the symmetric function evaluations
\begin{equation}
e_k(r,r+m,r+2m,\ldots,r+nm) =(-1)^k w_{m,r}(n+1,n+1-k)
\end{equation}
and 
\begin{equation}
h_k(r,r+m,r+2m,\ldots,r+nm) = W_{m,r}(n+k,n).
\end{equation}
Sums of powers of arithmetic progressions have been explored several times. We have the result \cite{Merca4}
\begin{equation}
p_k(r,r+m,r+2m,\ldots,r+nm)  = \sum_{j=0}^{n} (r+jm)^k = \frac{m^k}{k+1}\left(B_{k+1}\left(l+1+\frac{r}{m}\right)-B_{k+1}\left(\frac{r}{m}\right)\right),
\end{equation}
where $B_n(x)$ is a Bernoulli polynomial.

We now move from considering combinatorial quantities to multiple zeta values, which have been the subject of increasing study by number theorists. We define 
\begin{equation}
\zeta(s_1,\ldots,s_r) := \sum_{n_1 > n_2 > \cdots > n_r \geq 1} \frac{1}{{n_1}^{s_1}\cdots {n_r}^{s_r}}
\end{equation}
 and 
\begin{equation}
\zeta^\ast(s_1,\ldots,s_r) := \sum_{n_1 \geq n_2 \geq \cdots \geq n_r \geq 1} \frac{1}{{n_1}^{s_1}\cdots {n_r}^{s_r}},
\end{equation}
as \textit{multiple zeta} and \textit{multiple zeta star} values. These are generalizations of the Riemann zeta function which are increasingly important in fields from conformal field theory to knot theory. Letting $x_i = \frac{1}{i^s}$ be an infinite set of variables, we see that the definitions of these multiple zeta values and our symmetric polynomials $e_k$ and $h_k$ coincide. We also recognize $p_k$ as the Riemann zeta function since $p_k = \sum_{i=1}^\infty \frac{1}{i^{sk}}=\zeta(sk)$. This point of view is extensively explored in \cite{Wakhare1}.

Our last result naturally extends the multiple-zeta point of view. We take $x_i = \frac{1}{l_i^s}$, where $l_i$ is the $i$-th prime. We then see that $e_k$ is a summation ranging over all \textit{squarefree} integers with $k$ distinct prime factors, while $h_k$ is a summation over \textit{all} integers with $k$ distinct prime factors. This is encoded by the sums $$\sum_{\substack{n\geq 1\\\omega(n)=k}} \frac{\mu(n)^2}{n^s}$$ and $$\sum_{\substack{n\geq 1\\\omega(n)=k}} \frac{1}{n^s},$$
since $\mu(n)^2$, with $\mu(n)$ the \textit{M\"obius function}, is the indicator function for squarefree numbers. Additionally, $\omega(n)$ denotes the number of distinct prime factors of $n$. We then have an evaluation of $p_k$ in terms of the \textit{prime zeta function} $P(s)$, defined as $\sum_{i=1}^\infty \frac{1}{l_i^s}$. The reader is referred to \cite{Wakhare2} for further details.

We note some overarching trends: letting $x_i$ be a natural number and then considering an \textit{infinite number of variables $\{x_1,x_2,\ldots\}$} gives us information about multiple zeta values. Letting $x_i$ be a prime number gives us information about natural numbers. We have studied symmetric functions where $x_i$ is a polynomial of degree $0$, $1$, and $2$ in $i$.  Considering a \textit{finite number of variables} gives us information about classical combinatorial objects. Are there some results that hold for the symmetric functions of $x_i$ when $x_i$ is a polynomial of arbitrary degree in $i$? We also note that many $q$-series identities follow from setting $x_i$ to be some function of $q_i$, then taking $t\to1$ and reindexing any summations to be over powers of $q$.

These examples highlight the vast reach of symmetric polynomial identities, and the fact that any progress on basic symmetric function identities will have many varied applications.

\bigskip

\begin{center}

  \begin{tabular}{  c| c |c | c }
    $x_i$ & $e_k$ & $h_k$ & $p_k$\\ 
    \hline
    $\{1,1,\ldots,1\}$ ($n$ times) & $\binom{n}{k}$ & $\binom{n+k-1}{k}$ & $n$\\ 
    $\{1,q,q^2, \ldots, q^{n-1}\}$ & $q^{\binom{k}{2}} \begin{bmatrix} n \\ k \end{bmatrix}_q $ &  $ \begin{bmatrix} n+k-1 \\ k \end{bmatrix}_q$  & $ \frac{1-q^{nk}}{1-q^k}$\\ 
    $\{r,r+m,r+2m,\ldots,r+nm\}$ & $(-1)^k w_{m,r}(n+1,n+1-k)$ &  $ W_{m,r}(n+k,n)$  & Bernoulli polynomials\\ 
    $\{2\gamma, 2+4\gamma, \ldots,n(n-1+2\gamma)\}$ & $\begin{bmatrix} n \\ k \end{bmatrix}_\gamma$ &  $ \begin{Bmatrix} n \\ k \end{Bmatrix}_\gamma$  &  ??? \\ 
    $\{\frac{1}{1^s},\frac{1}{2^s}, \frac{1}{3^s}\ldots\}$ & $\zeta(\{s\}^k)$ & $\zeta^\ast(\{s\}^k)$ & $\zeta(sk)$\\ 
    $\{\frac{1}{2^s},\frac{1}{3^s}, \frac{1}{5^s},\ldots\}$ & $\sum_{\substack{n\geq 1\\\omega(n)=k}} \frac{\mu(n)^2}{n^s}$ &  $\sum_{\substack{n\geq 1\\\omega(n)=k}} \frac{1}{n^s}$  & $P(sk)$  \\ 
  \end{tabular}

\end{center}


\end{document}